\pgfplotsset{my style/.append style={axis x line=middle, axis y line=
middle, xlabel={$x$}, ylabel={$y$}, axis equal }}
\theoremstyle{plain}
\newtheorem{lemma}{Lemma}
\newtheorem{theorem}{Theorem}
\newtheorem{claim}{Claim}
\numberwithin{theorem}{section}
\numberwithin{proposition}{section}
\numberwithin{lemma}{section}
\numberwithin{corollary}{section}
\numberwithin{claim}{section}
\theoremstyle{definition}
\newtheorem{definition}{Definition}
\newtheorem{example}{Example}
\newtheorem{remark}{Remark}
\numberwithin{definition}{section}
\numberwithin{example}{section}
\numberwithin{question}{section}
\numberwithin{remark}{section}
\newcommand{\norm}[1]{\left\Vert #1\right\Vert}
\newcommand{\R}{\mathbb R}
\newcommand{\N}{\mathbb{N}}
\newcommand{\e}{\varepsilon}
\newcommand{\p}{\varphi}
\newcommand{\al}{\alpha}
\newcommand{\ii}{\infty}
\newcommand \beq{\begin{eqnarray*}}
\newcommand \eeq{\end{eqnarray*}}
\newcommand{\weak}{\rightharpoonup}
\title{A weak convergence theorem for mean nonexpansive mappings}
\author{Torrey M. Gallagher}
\address{University of Pittsburgh, United States}
\email{tmg34@pitt.edu}
\thanks{The present author would like to thank Chris Lennard for his helpful suggestions regarding the preparation of this paper.}
\date{April 4, 2016}
\keywords{Mean nonexpansive, Opial's property, asymptotically regular, weak convergence}
\begin{document}
\maketitle

\begin{abstract}
In this paper, we prove first that the iterates of a mean nonexpansive map defined on a weakly compact, convex set converge weakly to a fixed point in the presence of Opial's property and asymptotic regularity at a point.  Next, we prove the analogous result for closed, convex (not necessarily bounded) subsets of uniformly convex Opial spaces.  These results generalize the classical theorems for nonexpansive maps of Browder and Petryshyn in Hilbert space and Opial in reflexive spaces satisfying Opial's condition.
\end{abstract}

\section{Introduction}
Let $(X,\norm{\cdot})$ be a Banach space. Given $C \subseteq X$, we say a function $T: C \to X$ is \textit{nonexpansive} if 
\[
\norm{Tx-Ty}\leq \norm{x-y}
\]
for all $x,y \in C$.   It is a well-known application of Banach's contraction mapping theorem that every nonexpansive map $T : C\to C$ has an \textit{approximate fixed point sequence}; that is, a sequence $(u_n)_n$ in $C$ for which $\norm{Tu_n - u_n} \to_n 0$. Also, we say $T:C \to C$ is \textit{asymptotically regular at $x$} if
\[
\lim_{n\to\ii} \norm{T^n x - T^{n+1}x} = 0.
\]
If $T$ is asymptotically regular at every $x \in C$, we simply say $T$ is asymptotically regular.  Note that asymptotic regularity at $x$ implies that $(T^n x)_n$ is an approximate fixed point sequence for $T$.  Denote the set of all fixed points of $T$ as $F(T)$.  That is, $F(T) := \{x \in C : Tx=x\}$.

In 1966, Browder and Petryshyn \cite[Theorem 4]{browderpetryshyn66} proved the following theorem for asymptotically regular nonexpansive mappings on Hilbert space.

\begin{theorem}[Browder and Petryshyn]
Suppose $H$ is a Hilbert space and $T: H \to H$ is nonexpansive and asymptotically regular with $F(T)=\{x_0\}$.  Then $(T^nx)_n$ converges weakly to $x_0$.
\end{theorem}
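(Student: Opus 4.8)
The plan is to adapt the classical Opial-type argument, using the single structural fact that every Hilbert space satisfies Opial's property. First I would establish that the orbit $(T^n x)_n$ is bounded: since $x_0$ is a fixed point, iterating nonexpansiveness gives $\norm{T^n x - x_0} = \norm{T^n x - T^n x_0} \leq \norm{x - x_0}$ for every $n$, so the entire orbit lies in the closed ball of radius $\norm{x - x_0}$ centered at $x_0$. Because $H$ is reflexive, every subsequence of $(T^n x)_n$ admits a further subsequence converging weakly to some point of $H$. It therefore suffices to prove that every weak subsequential limit equals $x_0$; indeed, a bounded sequence all of whose weak cluster points coincide with a single point $y$ must itself converge weakly to $y$, which one checks functional-by-functional by extracting contradictory weakly convergent subsequences.

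The crux is to show that any weak subsequential limit is a fixed point of $T$. Suppose $T^{n_k} x \weak y$ and write $z_k = T^{n_k} x$. Asymptotic regularity at $x$ gives $\norm{z_k - T z_k} = \norm{T^{n_k} x - T^{n_k+1} x} \to 0$, so $(z_k)_k$ is an approximate fixed point sequence. If $Ty \neq y$, then Opial's property applied to $z_k \weak y$ yields
\[
\liminf_k \norm{z_k - y} < \liminf_k \norm{z_k - Ty}.
\]
On the other hand, the triangle inequality together with nonexpansiveness gives
\[
\norm{z_k - Ty} \leq \norm{z_k - T z_k} + \norm{T z_k - T y} \leq \norm{z_k - T z_k} + \norm{z_k - y},
\]
and since $\norm{z_k - T z_k} \to 0$ this forces $\liminf_k \norm{z_k - Ty} \leq \liminf_k \norm{z_k - y}$, contradicting the strict Opial inequality. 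Hence $Ty = y$, so $y \in F(T) = \{x_0\}$ and therefore $y = x_0$.

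Combining the two steps, the orbit is bounded and each of its weak cluster points equals $x_0$, whence $(T^n x)_n \weak x_0$, as desired. I expect the second step to be the main obstacle, since it is precisely where the interplay between weak convergence and the map $T$ (which need not be weakly continuous) must be controlled. The resolution rests on two cooperating hypotheses: asymptotic regularity, which supplies an approximate fixed point sequence along the orbit, and Opial's property, which upgrades this to the conclusion that weak limits are genuine fixed points. The remaining ingredients—boundedness of the orbit and the subsequence principle for weak convergence—are routine.
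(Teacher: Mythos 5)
Your argument is correct, and every step holds as written: the orbit bound $\norm{T^nx - x_0} \leq \norm{x-x_0}$ from the fixed point, the reflexivity/subsequence principle for weak cluster points, and the demiclosedness step via Opial's property are all sound. It is worth noting, however, that the paper never proves this theorem itself (it is quoted from Browder and Petryshyn as motivation), so the meaningful comparison is with the machinery the paper builds to generalize it --- and there your route is genuinely different. The paper's scheme (Lemmas \ref{existence} and \ref{uniqueness}, and the $F_r$-minimization in Theorem \ref{not bounded}) must first establish that $\lim_n \norm{T^nx - y}$ \emph{exists} for each weak cluster point $y$, via the $(k_n)$-subsequence construction that is the technical heart of the paper; this is unavoidable there because for a mean nonexpansive map $F(T)$ may contain many points, and ruling out two distinct weak cluster points $y \neq z$ with Opial's property requires comparing limits along different subsequences, which is only legitimate when the full limits exist. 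You sidestep that entire layer by exploiting the hypothesis $F(T) = \{x_0\}$: demiclosedness of $I - T$ at $0$ --- which you reprove directly from nonexpansiveness plus the strict Opial inequality, the classical G\"ohde--Browder--Opial argument, rather than invoking it as a black box --- shows every weak cluster point is a fixed point, hence equals $x_0$, and the subsequence principle finishes. What your approach buys is a shorter, more elementary proof of the stated special case, needing neither the limit $\lim_n \norm{T^nx - y}$ nor any asymptotic-center or uniform-convexity argument; what it cannot do is extend to the paper's setting, where uniqueness of the weak cluster point is a conclusion to be earned (via Lemma \ref{uniqueness} or the singleton-ness of $F_{r_0}$) rather than a hypothesis.
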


In 1967, Opial \cite{opial67} extended this theorem to spaces satisfying Opial's property.  Recall that $C \subseteq X$ has the \textit{Opial property} if, whenever $(u_n)_n$ is a sequence in $C$ converging weakly to some $u \in X$, we have
\[
\liminf_n \norm{u_n - u} < \liminf_n \norm{u_n - v}
\]
for any $v \neq u$.  All Hilbert spaces have the Opial property, as do $\ell^p$ spaces for all $p \in (1, \ii)$.  $L^p$ fails to have the Opial property for all $p\neq 2$, however.

We will further extend Opial's result to the class of mean nonexpansive maps, first defined by Goebel and Jap\'on Pineda in 2007 \cite{gjp07}.  We say $T : C \to C$ is \textit{mean nonexpansive} (or $\al$-nonexpansive) if, for some multi-index $\al=(\al_1,\ldots,\al_n)$ with $\al_1, \al_n >0$, $\al_j \geq 0$ for all $j$, and $\al_1+\cdots +\al_n = 1$, we have
\[
\sum_{j=1}^n \al_j \norm{T^jx - T^jy} \leq \norm{x-y}
\]
for all $x, y \in C$.  Goebel and Jap\'on Pineda further suggested the notion of \textit{$(\al,p)$}-nonexpansiveness, wherein $T$ would satisfy
\[
\sum_{j=1}^n \al_j \norm{T^jx - T^jy}^p \leq \norm{x-y}^p
\]
for some $p \in [1,\ii)$.  It is easy to check that any $(\al,p)$-nonexpansive map is mean nonexpansive (i.e. $(\al,1)$-nonexpansive), but the converse does not necessarily hold \cite{piasecki13}.

To prove our theorem, we will need one further notion.  We will use ``$\weak$'' to denote weak convergence and ``$\to$'' to denote strong convergence.  We say $T: C \to X$ is \textit{demiclosed at $y$} if, whenever  $x_n \weak x$ in $C$ and $Tx_n \to y$, it follows that $Tx=y$.  The present author recently proved \cite[Theorem 4.2]{gallagher16} that if $C \subseteq X$ is closed and convex and has the Opial property, then any mean nonexpansive map $T: C\to C$ is demiclosed at 0.  We will use this demiclosedness principle to extend the theorems of Browder and Petryshyn and Opial stated above.  First, we will present the results for the simple case of multi-indices of length 2 before proving the full theorem for multi-indices of arbitrary length.


\section{Results for $\al=(\al_1,\al_2)$}

Let us state the main theorem of this section.  The proofs of the following theorem and lemmas can be found in the next section.

\begin{theorem}\label{theorem}
Suppose $(X, \norm{\cdot})$ is a Banach space and $C \subseteq X$ is weakly compact, convex, and has the Opial property.  Suppose further that $T : C \to C$ is $(\al_1,\al_2)$-nonexpansive and asymptotically regular at some point $x \in C$. Then $(T^n x)_n$ converges weakly to a fixed point of $T$.
\end{theorem}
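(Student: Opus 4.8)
The plan is to prove weak convergence of the orbit $(T^n x)_n$ to a fixed point by the standard two-part strategy used for such theorems: first establish that every weak subsequential limit of the orbit is a fixed point, and then use the Opial property to upgrade ``every weak limit is a fixed point'' into ``the whole sequence converges weakly.'' Since $C$ is weakly compact, the orbit $(T^n x)_n$ has weak subsequential limits, so the set of such limits is nonempty.

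For the first part, I would proceed as follows. Since $T$ is asymptotically regular at $x$, the orbit $(T^n x)_n$ is an approximate fixed point sequence, i.e. $\norm{T^{n}x - T^{n+1}x} \to 0$. Suppose $T^{n_k}x \weak u$ along some subsequence. The goal is to show $Tu = u$, and I expect to route this through the demiclosedness-at-$0$ principle quoted in the introduction (Theorem~4.2 of \cite{gallagher16}), applied to the map $I - T$ or to $T$ after a translation. Concretely, set $y_k := T^{n_k}x$; then $y_k \weak u$. The subtlety with mean nonexpansive maps is that the defining inequality controls a convex combination $\al_1\norm{Ty_k - Tu} + \al_2\norm{T^2 y_k - T^2 u}$ rather than $\norm{Ty_k - Tu}$ directly, so I cannot simply say $T$ is nonexpansive and conclude $Ty_k \weak$ something. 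The key computation I anticipate is to show that $\norm{y_k - T y_k} \to 0$ forces the relevant quantity $(I-T)y_k \to 0$ strongly, and then invoke demiclosedness of $I-T$ at $0$ to deduce $(I-T)u = 0$, that is, $u \in F(T)$. Establishing that $(I-T)$ (rather than $I-T^2$, say) is the right map to which demiclosedness applies, and reconciling the length-$2$ averaging with a single-step displacement going to zero, is where the mean nonexpansive hypothesis must be used carefully.

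For the second part, once I know every weak subsequential limit of $(T^n x)_n$ lies in $F(T)$, I would show that the limit is in fact unique and that the whole orbit converges to it. The mechanism is the Opial property together with monotonicity of the distance sequence: for any fixed point $p \in F(T)$, I expect $\norm{T^n x - p}$ to be nonincreasing in $n$, hence convergent; this monotonicity should follow from the $(\al_1,\al_2)$-nonexpansiveness applied with $y = p$ (using $T^j p = p$), giving control of $\al_1\norm{T^{n+1}x - p} + \al_2\norm{T^{n+2}x - p} \le \norm{T^n x - p}$, from which a convergence or limit-existence argument can be extracted. Suppose two subsequences converge weakly to distinct fixed points $u \neq v$; since $\lim_n \norm{T^n x - u}$ and $\lim_n \norm{T^n x - v}$ both exist, the standard Opial argument — comparing $\liminf$ along the subsequence to $u$ against $\liminf$ to $v$, and vice versa — yields a strict contradiction. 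Hence the weak subsequential limit is unique, and by weak compactness the entire sequence converges weakly to that single fixed point.

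The main obstacle I expect is the first part: getting from the single-step asymptotic regularity $\norm{T^n x - T^{n+1}x}\to 0$ to a clean application of the demiclosedness principle, precisely because mean nonexpansiveness bundles two iterates together. I would need to verify that the displacement map whose vanishing I detect strongly is exactly the one for which \cite[Theorem~4.2]{gallagher16} guarantees demiclosedness at $0$, and to confirm that asymptotic regularity at the single point $x$ (not everywhere) suffices along the chosen subsequence. The monotonicity of $\norm{T^n x - p}$ in the second part is also slightly delicate for mean nonexpansive maps, since the averaged inequality does not immediately give term-by-term monotone decrease; I anticipate needing a short argument (possibly a telescoping or a minimum-over-pairs estimate) to extract a genuinely convergent distance sequence, which is the other place the proof could require care.
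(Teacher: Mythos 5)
Your overall skeleton is the same as the paper's: weak compactness gives a nonempty set $\omega_w(x)$ of weak subsequential limits of the orbit; asymptotic regularity at $x$ makes $(T^n x)_n$ an approximate fixed point sequence, and demiclosedness of $I-T$ at $0$ places every weak subsequential limit in $F(T)$; then existence of $\lim_n \norm{T^n x - y}$ for each $y \in \omega_w(x)$, combined with the Opial property, forces $\omega_w(x)$ to be a singleton, so the whole orbit converges weakly to that fixed point. Your worry in the first part --- whether $I-T$ (rather than $I-T^2$) is the right map, and whether asymptotic regularity at the single point $x$ suffices --- is actually a non-issue: the cited demiclosedness principle \cite[Theorem 4.2]{gallagher16} is stated precisely for $I-T$ with $T$ mean nonexpansive on a closed, convex Opial set, and asymptotic regularity at $x$ says exactly that $\norm{(I-T)T^n x} \to 0$, which is all that is needed along the orbit. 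No extra reconciliation of the two-step averaging with the one-step displacement is required there.

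The genuine gap is the step you flag but leave unresolved: proving that $\lim_n \norm{T^n x - y}$ exists for $y \in \omega_w(x)$. You correctly observe that the averaged inequality $\al_1 \norm{T^{n+1}x - y} + \al_2 \norm{T^{n+2}x - y} \leq \norm{T^n x - y}$ (valid since $Ty=y$) does not give term-by-term monotone decrease, but a ``telescoping or minimum-over-pairs estimate'' alone does not close this. The paper's argument (Lemma \ref{existence}) needs two ideas in tandem. First, the averaged inequality shows that at least one of the two successors satisfies $\norm{T^{k+i}x - y} \leq \norm{T^{k}x - y}$ for some $i \in \{1,2\}$, so one inductively selects indices $(k_n)_n$ with $k_n + 1 \leq k_{n+1} \leq k_n + 2$ along which the distances are nonincreasing, hence convergent to some $q$. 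Second --- and this is the idea missing from your plan --- every index $m$ skipped by this subsequence is of the form $k_j + 1$ for some selected $k_j$, so asymptotic regularity is invoked a \emph{second} time, via $\bigl|\, \norm{T^{m}x - y} - \norm{T^{m-1}x - y} \,\bigr| \leq \norm{T^{m}x - T^{m-1}x} \to 0$, to force the distances at the skipped indices to converge to the same $q$. Without this interpolation step, a convergent subsequence of distances tells you nothing about the full sequence $(\norm{T^n x - y})_n$, and your Opial uniqueness argument cannot run: it requires genuine limits (so that liminfs along the two subsequences can be replaced by limits of the whole sequence and compared), not merely subsequential ones.
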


To ensure that this theorem is a genuine extension of the classical theorems for nonexpansive maps, we present an example of a $((\frac{1}{2},\frac{1}{2}), 2)$-nonexpansive (hence mean nonexpansive) map defined on $(\ell^2,\norm{\cdot}_2)$ for which none of its iterates are nonexpansive. The map below is based on an example given by Goebel and Sims \cite{goebelsims10} and can also be found in \cite{gallagher16}; moreover, it is asymptotically regular.

\begin{example}
Let $(\ell^2,\norm{\cdot}_2)$ be the Hilbert space of square-summable sequences endowed with its usual norm.  Let $\tau : [-1,1]\to[-1,1]$ be given by
\[
\tau(t) :=
\begin{cases}
\sqrt{2} \, t + (\sqrt{2}-1) & -1 \leq t \leq -\frac{\sqrt{2}-1}{\sqrt{2}}\\
0 & -\frac{1+\sqrt{2}}{\sqrt{2}} \leq t \leq \frac{1+\sqrt{2}}{\sqrt{2}}\\
\sqrt{2} \, t - (\sqrt{2}-1) & \frac{\sqrt{2}-1}{\sqrt{2}} \leq t \leq 1
\end{cases}
\]
and note the following facts about $\tau$: 
\begin{enumerate}
\item $\tau$ is Lipschitz with $k(\tau) = \sqrt{2}$,
\item $|\tau(t)| \leq |t|$ for all $t \in [-1,1]$, and
\end{enumerate} 
Let $B_{\ell^2}$ denote the closed unit ball of $(\ell^2,\norm{\cdot}_2)$ and for any $x \in \ell^2$, define $T$ by
\[
T(x_1,x_2,\ldots) := \left(\tau(x_2),\sqrt{\frac{2}{3}}\,x_3, x_4, x_5, \ldots\right)
\]
and
\[
T^2 (x_1,x_2,\ldots) = \left( \tau\left( \sqrt{\frac{2}{3}} \, x_3 \right),  \sqrt{\frac{2}{3}} \, x_4, x_5, \ldots \right)
\]
Observe that $|\tau(t)|\leq|t|$ implies that $T(B_{\ell^2}) \subseteq B_{\ell^2}$, and $k(T)=\sqrt{2}>1$ and $k(T^j)= \frac{2}{\sqrt{3}}>1$ for all $j\geq 2$.  Now, for any $x, y \in B_{\ell^2}$ we find
\begin{align*}
\frac{1}{2} \norm{Tx-Ty}_2^2 &+ \frac{1}{2}\norm{T^2x-T^2y}_2^2\\
	&= \frac{1}{2}\left( |\tau(x_2)-\tau(y_2)|^2 + \frac{2}{3} |x_3 - y_3|^2 + \sum_{j=4}^\ii |x_j - y_j|^2 \right)\\
	&+ \frac{1}{2}\left( \Bigg|\tau\left( \sqrt{\frac{2}{3}} \, x_3\right)-\tau\left(\sqrt{\frac{2}{3}} \, y_3\right)\Bigg|^2 + \frac{2}{3} |x_4 - y_4|^2 + \sum_{j=5}^\ii |x_j - y_j|^2 \right)\\
	&\leq \frac{1}{2} \left( 2 |x_2 - y_2|^2 + \frac{4}{3}|x_3 - y_3|^2 + \frac{5}{3}|x_4-y_4|^2 + 2 \sum_{j=5}^\ii |x_j-y_j|^2\right)\\
	&\leq \norm{x-y}_2^2
\end{align*}
Hence, $T: B_{\ell^2} \to B_{\ell^2}$ is a $((\frac{1}{2},\frac{1}{2}), 2)$-nonexpansive map for which each iterate $T^j$ is not nonexpansive.
\end{example}

Before proving the theorem, let's state some preliminary definitions and results.  For any $x \in C$, let 
\[
\omega_w (x) := \{ y \in C : y \mbox{ is a weak subsequential limit of } (T^nx)_n \}
\]
and note that if $C$ is weakly compact, then $\omega_w (x) \neq \emptyset$.  Further note that if $I-T$ is demiclosed at $0$ and asymptotically regular at $x$, then $\emptyset \neq \omega_w (x) \subseteq F(T)$. We have the following lemma.

\begin{lemma}\label{existence}
Suppose $C$ is weakly compact and convex with the Opial property, and suppose that $T : C \to C$ is $(\al_1,\al_2)$-nonexpansive and asymptotically regular at some $x \in C$.  Then for all $y \in \omega_w (x)$, $\lim_n \norm{T^nx - y}$ exists.
\end{lemma}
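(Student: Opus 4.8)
The plan is to reduce the statement to the convergence of a cleverly weighted \emph{monotone} auxiliary sequence of nonnegative reals. First I would invoke the observation recorded just before the lemma: since $T$ is asymptotically regular at $x$ and $I-T$ is demiclosed at $0$ (a consequence of \cite[Theorem 4.2]{gallagher16}), we have $\emptyset \neq \omega_w(x) \subseteq F(T)$. Hence any $y \in \omega_w(x)$ is a fixed point, so $Ty = y$ and therefore $T^2 y = y$ as well. This is what makes the nonexpansiveness inequality collapse into a usable recurrence.

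Writing $a_n := \norm{T^n x - y}$, I would then apply the $(\al_1,\al_2)$-nonexpansiveness inequality to the pair $(T^n x, y)$ and use $Ty = T^2 y = y$ to obtain the key recurrence
\[
\al_1\, a_{n+1} + \al_2\, a_{n+2} \leq a_n \qquad \text{for all } n.
\]
Separately, asymptotic regularity at $x$ together with the reverse triangle inequality gives $|a_{n+1} - a_n| \leq \norm{T^{n+1}x - T^n x} \tto 0$; and since $C$ is weakly compact, hence norm bounded, $(a_n)_n$ is a bounded sequence of nonnegative reals whose consecutive terms become arbitrarily close.

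The crux is that one cannot pass to the limit directly in the recurrence: because asymptotic regularity forces all nearby terms to share the same subsequential limits, the inequality degenerates to the trivial $M \leq M$ and yields nothing. Instead, I would introduce the auxiliary sequence
\[
e_n := a_n + \al_2\, a_{n+1}.
\]
Using $\al_1 + \al_2 = 1$ together with the recurrence, a one-line computation gives $e_n \geq \al_1 a_{n+1} + \al_2 a_{n+2} + \al_2 a_{n+1} = a_{n+1} + \al_2 a_{n+2} = e_{n+1}$, so $(e_n)_n$ is non-increasing and bounded below by $0$, hence converges to some $\ell \geq 0$. Finally, since
\[
e_n = (1+\al_2)\, a_n + \al_2\, (a_{n+1} - a_n)
\]
and $a_{n+1} - a_n \tto 0$, I would conclude that $a_n \tto \ell/(1+\al_2)$, which is precisely the assertion that $\lim_n \norm{T^n x - y}$ exists.

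The main obstacle is exactly the discovery of the weighting $e_n = a_n + \al_2 a_{n+1}$ that turns the mixed-index recurrence $\al_1 a_{n+1} + \al_2 a_{n+2} \leq a_n$ into a genuinely monotone sequence; once this combination is identified, every remaining step is routine. I would expect the same idea, with a suitably chosen weighted tail $\sum_j \beta_j a_{n+j}$, to be the mechanism that drives the length-$n$ version of the lemma later in the paper.
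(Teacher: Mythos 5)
Your proof is correct, but it is genuinely different from the one in the paper. The paper argues combinatorially: from $\al_1\norm{T^{k+1}x-y}+\al_2\norm{T^{k+2}x-y}\leq\norm{T^kx-y}$ (with $\al_1+\al_2=1$) it greedily selects a subsequence of indices $(k_n)_n$ with $k_n+1\leq k_{n+1}\leq k_n+2$ along which $(\norm{T^{k_n}x-y})_n$ is non-increasing, hence convergent to some $q$; it then uses asymptotic regularity to show that every skipped index $m$ (necessarily of the form $k_{j}+1$) satisfies $\norm{T^m x-y}\approx q$, so the full sequence converges. Your argument replaces this subsequence selection and gap-filling with the Lyapunov-type functional $e_n := a_n+\al_2 a_{n+1}$, whose monotonicity is a one-line consequence of the recurrence $\al_1 a_{n+1}+\al_2 a_{n+2}\leq a_n$, and then uses asymptotic regularity only through $a_{n+1}-a_n\to 0$ to pass from $\lim_n e_n=\ell$ to $\lim_n a_n=\ell/(1+\al_2)$. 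Both proofs use the same two inputs (demiclosedness to get $y\in F(T)$, and asymptotic regularity at $x$), but your route eliminates all index bookkeeping, and, as you observe, it scales mechanically to arbitrary $\al=(\al_1,\dots,\al_{n_0})$ by taking $e_n=\sum_{j=0}^{n_0-1}\beta_j a_{n+j}$ with $\beta_j=\al_{j+1}+\cdots+\al_{n_0}$, which telescopes exactly as in the length-two case; this is arguably cleaner than the paper's Section 4 argument. Two small remarks: your appeal to weak compactness for boundedness of $(a_n)_n$ is superfluous, since $e_n\geq 0$ and monotonicity already give convergence of $(e_n)_n$; and, precisely because of this, your argument (like the paper's, see its Remark 3.1) uses only $y\in F(T)$ rather than $y\in\omega_w(x)$, so it also supports the later theorem on unbounded $C$ without modification.
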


Our theorem will be proved if we can show that $\omega_w (x)$ is a singleton.  This follows from the fact that $C$ is Opial and the knowledge that $(\norm{T^nx - y})_n$ converges for all $y \in \omega_w (x)$, as summarized in the following lemma.\\

\begin{lemma}\label{uniqueness}
If $C \subseteq X$ is Opial, $T : C\to C$ is a function, and for some $x \in C$, $\lim_n \norm{T^nx - y}$ exists for all $y \in \omega_w (x)$, then $\omega_w (x)$ is empty or consists of a single point.
\end{lemma}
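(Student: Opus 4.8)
The plan is a short argument by contradiction that leverages the strict inequality in Opial's property, the key realization being that the hypothesis converts the $\liminf$ appearing in that property into an honest, subsequence-independent limit. So I would assume, toward a contradiction, that $\omega_w(x)$ contains two distinct points $y \neq z$; by the definition of $\omega_w(x)$ there are subsequences with $T^{n_k}x \weak y$ and $T^{m_j}x \weak z$.

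The crucial step is to record that, by hypothesis, $\lim_n \norm{T^n x - y} =: a$ and $\lim_n \norm{T^n x - z} =: b$ both exist. Since a convergent sequence of reals and every one of its subsequences share the same limit, we obtain $\liminf_k \norm{T^{n_k}x - y} = a$ and $\liminf_j \norm{T^{m_j}x - y} = a$, and likewise every $\liminf$ of distances to $z$ along these subsequences equals $b$. This is precisely what lets us invoke Opial along either subsequence without the relevant value depending on which subsequence was chosen.

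Now I would apply Opial's property twice. Applied to $T^{n_k}x \weak y$ with competitor point $v = z \neq y$, it yields $a < b$; applied to $T^{m_j}x \weak z$ with competitor point $v = y \neq z$, it yields $b < a$. Together these give the impossible chain $a < b < a$. Hence $\omega_w(x)$ cannot contain two distinct points, so it is empty or a singleton, as claimed.

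I expect essentially no serious obstacle here: the only point requiring care is the bookkeeping in the crucial step, namely matching the $\liminf$ in the statement of Opial's property to the genuine limits supplied by the hypothesis, and exploiting the symmetry between $y$ and $z$ so that both strict inequalities are simultaneously available. It is worth noting that neither mean nonexpansiveness nor even continuity of $T$ is used anywhere; the statement is purely about the interaction of Opial's property with a sequence whose distances to each of its weak subsequential limits converge.
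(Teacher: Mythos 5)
Your proposal is correct and follows essentially the same argument as the paper: assume two distinct weak subsequential limits, use the convergence of the full sequences of distances to identify all subsequential $\liminf$'s with the honest limits, and apply Opial's property along each of the two subsequences to obtain the contradictory chain $a < b < a$. The only difference is presentational — you name the limits $a$ and $b$ and derive the two strict inequalities separately, while the paper writes the same contradiction as a single chain of equalities and inequalities.
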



\section{Proofs}
\begin{proof}[Proof of Lemma \ref{existence}]
$C$ closed and convex with the Opial property implies that $I-T$ is demiclosed at 0.  That is, whenever $(z_n)_n$ is a sequence in $C$ converging weakly to some $z$ (which is necessarily in $C$ since closed and convex implies weakly closed) for which $\norm{(I-T)z_n} \to_n 0$, it follows that $(I-T)z = 0$.

By the asymptotic regularity of $T$ at $x$, we have that $(T^nx)_n$ is an approximate fixed point sequence for $T$.

Since $y \in \omega_w (x)$ and $I-T$ is demiclosed at $0$, we have that $y$ is a fixed point of $T$ and we see that
\begin{align*}
\al_1 \norm{Tx - y} + \al_2 \norm{T^{2}x -y } &= \al_1 \norm{Tx - Ty} + \al_2 \norm{T^{2}x -T^{2}y }\\
&\leq \norm{x - y}
\end{align*}

Hence, at least one of $\norm{Tx-y}$ or $\norm{T^2x-y}$ must be less than or equal to $\norm{x-y}$.  Let $k_1 \in \{1,2\}$ be such that $\norm{T^{k_1}x-y} \leq \norm{x-y}$.

Next, we know that
\begin{align*}
\al_1 \norm{T^{k_1+1}x - y} + \al_2 \norm{T^{k_1+2}x -y } &= \al_1 \norm{T^{k_1+1}x - T^{k_1+1}y} + \al_2 \norm{T^{k_1+2}x -T^{k_1+2}y }\\
&\leq \norm{T^{k_1}x - T^{k_1} y}\\
&= \norm{T^{k_1}x - y}
\end{align*}
and so one of $\norm{T^{k_1+1}x - y}$ or $\norm{T^{k_1+2}x - y}$ must be less than or equal to $\norm{T^{k_1}x - y}$.  As above, let $k_2 \in \{k_1+1, k_1+2\}$ be such that $\norm{T^{k_2}x-y} \leq \norm{T^{k_1}x-y}$.

Inductively, build a sequence $(k_n)_n$ which satisfies 
\begin{enumerate}
\item $k_n + 1 \leq k_{n+1} \leq k_n + 2$, and
\item $\norm{T^{k_{n+1}}x-y} \leq \norm{T^{k_n}x-y}$
\end{enumerate}
for all $n \in \N$.  Now $(\norm{T^{k_n}x-y})_n$ is a non-increasing sequence in $\R^+$, and is thus convergent to some $q\in \R^+$.  

Consider the set $M := \N \setminus \{k_n : n \in \N \}$.  We have two cases.  First, if $M$ is a finite set, then the claim is proved.  Second, if $M$ is infinite, write $M = \{ m_n : n \in \N \}$, where $(m_n)_n$ is strictly increasing.  Note that, by property (1) of the sequence $(k_n)_n$ above, we must have that for all $n \in \N$, there exists a $j_n \in \N$ for which
\[
m_n = k_{j_n}+1
\]

Also, $(j_n)_n$ is strictly increasing.  Asymptotic regularity of $T$ at $x$ and the fact that $\lim_n \norm{T^{k_n}x-y} = q$ gives us that for any $\e >0$, there is $n$ large enough such that 
\begin{enumerate}
\item $\norm{T^{m_n}x - T^{m_n-1}x} < \e/2$, and
\item $\big|\, \norm{T^{k_{j_n}}x - y} - q \,\big| < \e/2$.
\end{enumerate}

Thus, 
\begin{align*}
\norm{T^{m_n}x - y} - q &\leq \norm{T^{m_n}x - T^{m_n-1}x} + \norm{T^{m_n-1}x-y}-q\\
	&= \norm{T^{m_n}x - T^{m_n-1}x} + \norm{T^{k_{j_n}}x - y} - q\\
	&< \frac{\e}{2} + \frac{\e}{2} = \e
\end{align*}

Entirely similarly, we have that
\begin{align*}
\norm{T^{m_n}x - y} - q &\geq -\norm{T^{m_n}x - T^{m_n-1}x} + \norm{T^{m_n-1}x-y}-q\\
	&= -\norm{T^{m_n}x - T^{m_n-1}x} + \norm{T^{k_{j_n}}x - y} - q\\
	&> -\frac{\e}{2} - \frac{\e}{2} = -\e
\end{align*}

Hence, $\big| \, \norm{T^{m_n}x -y } - q \, \big| < \e$ for $n$ large enough.  Since $\{m_n : n \in \N \} \cup \{k_n : n \in \N\} = \N$, we have finally that $\lim_n \norm{T^nx - y}$ exists for any $y \in \omega_w (x)$.
\end{proof}

\begin{remark}\label{remark}
The above argument presented in the proof above actually works for any $y \in F(T)$, but in particular for $y \in \omega_w (x)$.  This will be of use to us in Theorem \ref{not bounded}.
\end{remark}

\begin{proof}[Proof of Lemma \ref{uniqueness}]
Suppose for a contradiction that $z$ and $y$ are distinct elements of $\omega_w (x)$.  Then there exist $(n_k)_k$ and $(m_k)_k$ for which $T^{n_k}x \weak_k z$ and $T^{m_k}x \weak_k y$.  Thus, using the fact that $C$ is Opial, we have
\begin{align*}
\lim_n \norm{T^nx - y} &= \lim_n \norm{T^{m_k}x-y}\\
	&< \lim_n \norm{T^{m_k}x - z}\\
	&= \lim_n \norm{T^n x - z}\\
	&= \lim_n \norm{T^{n_k}x-z}\\
	&< \lim_n \norm{T^{n_k}x-y}\\
	&= \lim_n \norm{T^n x - y},
\end{align*}
which is a contradiction.  Thus, $\omega_w (x)$ is a singleton.
\end{proof}

\begin{proof}[Proof of Theorem \ref{theorem}]

As stated above,  let $\omega_w (x) := \{y \in C : y \mbox{ is a weak subsequential limit of } (T^nx)_n \}$ and note that $\omega_w(x) \neq \emptyset$ since $C$ is weakly compact, as well as that the demiclosedness of $I-T$ at $0$ gives us that $\omega_w (x) \subseteq F(T)$.  By Lemma \ref{uniqueness}, we know that $\omega_w (x)$ consists of a single point, say $y$.  Thus, $T^nx \weak_n y$, and the theorem is proved.
\end{proof}


\section{Results for arbitrary $\al$}
We have the corresponding theorem for $\al$ of arbitrary length.

\begin{theorem}
If $C \subseteq X$ is weakly compact, convex, and has the Opial property, $T: C \to C$ is $\al$-nonexpansive and asymptotically regular at some point $x \in C$, then $T^nx$ converges weakly to a fixed point of $T$.
\end{theorem}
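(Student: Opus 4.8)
The plan is to follow the template of the $\al=(\al_1,\al_2)$ case essentially verbatim, reducing everything to an appropriately generalized version of Lemma \ref{existence}. Since $C$ is closed, convex, and Opial, the demiclosedness principle (\cite[Theorem 4.2]{gallagher16}) still applies, so $I-T$ is demiclosed at $0$; combined with weak compactness and asymptotic regularity at $x$, this gives $\emptyset \neq \omega_w(x) \subseteq F(T)$, where membership in $F(T)$ follows by applying demiclosedness to $z_k := T^{n_k}x \weak y$ with $\norm{(I-T)z_k} \to 0$. Crucially, Lemma \ref{uniqueness} makes no reference to the length of $\al$, so once I know that $\lim_N \norm{T^N x - y}$ exists for every $y \in \omega_w(x)$, that lemma immediately forces $\omega_w(x)$ to be a singleton $\{y\}$, and then $T^N x \weak y$ exactly as in the proof of Theorem \ref{theorem}. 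Thus the whole burden is to extend Lemma \ref{existence} to multi-indices of arbitrary length.

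For that extension, fix $y \in \omega_w(x) \subseteq F(T)$. The key inequality is that, applying $\al$-nonexpansiveness to the pair $T^m x, y$ and using $T^j y = y$, one has
\[
\sum_{j=1}^n \al_j \norm{T^{m+j} x - y} \leq \norm{T^m x - y}
\]
for every $m \geq 0$. Since $\sum_j \al_j = 1$ with $\al_j \geq 0$, an averaging argument shows that at least one index $j$ with $\al_j > 0$ satisfies $\norm{T^{m+j} x - y} \leq \norm{T^m x - y}$. I would use this to build inductively a strictly increasing sequence $(k_m)_m$ with bounded gaps $1 \leq k_{m+1} - k_m \leq n$ along which $(\norm{T^{k_m} x - y})_m$ is non-increasing; being non-increasing and bounded below, it converges to some $q \in \R^+$.

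It then remains to show that the indices missing from $(k_m)_m$ also produce norms converging to $q$. Writing $M = \N \setminus \{k_m : m \in \N\}$, if $M$ is finite we are done, and if $M = \{m_1 < m_2 < \cdots\}$ is infinite, each $m_i$ lies in a gap, so $m_i = k_{j_i} + \ell_i$ with $1 \leq \ell_i \leq n-1$ and $j_i \to \ii$. The estimate
\[
\big|\, \norm{T^{m_i} x - y} - q \,\big| \leq \norm{T^{m_i} x - T^{k_{j_i}} x} + \big|\, \norm{T^{k_{j_i}} x - y} - q \,\big|
\]
reduces matters to controlling $\norm{T^{m_i} x - T^{k_{j_i}} x}$, which by the triangle inequality is at most $\sum_{s=0}^{\ell_i - 1} \norm{T^{k_{j_i}+s+1} x - T^{k_{j_i}+s} x}$, a sum of at most $n-1$ consecutive differences.

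The hard part, and the only genuinely new point beyond the length-$2$ argument, is exactly this last estimate. In the $\al=(\al_1,\al_2)$ case the gaps have length at most $2$, so a missing index sits a single step away from the skeleton sequence and one application of asymptotic regularity suffices. For general $\al$ the gaps may be as large as $n$, so a missing index can be up to $n-1$ steps from the nearest $k_{j_i}$. The saving feature is that this distance is uniformly bounded by $n-1$: each of the at most $n-1$ summands $\norm{T^a x - T^{a+1} x}$ has $a \geq k_{j_i} \to \ii$, so asymptotic regularity drives each summand to $0$, and a bounded number of them sum to something arbitrarily small. Together with $j_i \to \ii$, this sends both terms in the estimate above to $0$, so $\norm{T^{m_i} x - y} \to q$. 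Since $\{k_m : m \in \N\} \cup M = \N$, the full limit $\lim_N \norm{T^N x - y} = q$ exists, completing the generalized Lemma \ref{existence} and hence the theorem.
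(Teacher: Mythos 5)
Your proposal is correct and follows essentially the same route as the paper: build a skeleton sequence $(k_m)_m$ with gaps bounded by the length of $\al$ along which $(\norm{T^{k_m}x-y})_m$ is non-increasing, then use asymptotic regularity to control the at most $n-1$ consecutive differences separating any missing index from the skeleton, and finish with the length-independent uniqueness lemma and weak compactness. Your explicit restriction of the averaging argument to indices $j$ with $\al_j>0$ is a small point of care that the paper leaves implicit, but the argument is otherwise the same.
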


The theorem will follow immediately from the analogous lemma concerning convergence of the sequence $(\norm{T^nx-y})_n$ for any $y \in \omega_w (x)$.

\begin{lemma}
Suppose $C$ is weakly compact and convex with the Opial property, and suppose that $T : C \to C$ is $\al$-nonexpansive and asymptotically regular at some $x \in C$.  Then for all $y \in \omega_w (x)$, $\lim_n \norm{T^nx - y}$ exists.
\end{lemma}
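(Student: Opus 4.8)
The plan is to follow the scheme of Lemma~\ref{existence}, generalizing the two ingredients that were special to length-two multi-indices: the index-selection step and the gap-filling step. As before, since $C$ is closed and convex with the Opial property, $I-T$ is demiclosed at $0$ by \cite[Theorem 4.2]{gallagher16}, so every $y \in \omega_w(x)$ is a fixed point of $T$. Fixing such a $y$ and applying $\al$-nonexpansiveness to the pair $T^k x,\, y$ for any $k \ge 0$ (with $T^0 x = x$), the identity $T^j y = y$ turns the defining inequality into the convex estimate
\[
\sum_{j=1}^n \al_j \norm{T^{k+j}x - y} \;\le\; \norm{T^k x - y}.
\]

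Because $\sum_{j=1}^n \al_j = 1$, the right-hand side equals the same convex combination with every entry replaced by $\norm{T^k x - y}$, so the inequality forces at least one index $j \in \{1,\ldots,n\}$ with $\al_j > 0$ to satisfy $\norm{T^{k+j}x - y} \le \norm{T^k x - y}$. Starting from $k_0 = 0$ and iterating this selection, I would build a strictly increasing sequence of indices $0 = k_0 < k_1 < k_2 < \cdots$ satisfying $1 \le k_{i+1}-k_i \le n$ and $\norm{T^{k_{i+1}}x - y} \le \norm{T^{k_i}x - y}$ for all $i$. The sequence $\big(\norm{T^{k_i}x - y}\big)_i$ is then non-increasing in $\R^+$ and hence converges to some $q \ge 0$. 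The essential gain over the length-two case is that the gaps $k_{i+1}-k_i$ are now bounded by $n$ rather than by $2$, but they remain uniformly bounded.

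It remains to show that the full sequence $\big(\norm{T^n x - y}\big)_n$, and not merely the subsequence indexed by $(k_i)_i$, converges to $q$; this is the step I expect to be the main obstacle. For an index $m$ not of the form $k_i$, let $k_j$ be the largest chosen index below $m$, so that $0 < m - k_j \le n-1$. A telescoping application of the triangle inequality gives
\[
\Big|\,\norm{T^m x - y} - \norm{T^{k_j}x - y}\,\Big| \;\le\; \sum_{i=k_j}^{m-1}\norm{T^{i+1}x - T^i x},
\]
and the right-hand side is a sum of at most $n-1$ consecutive-displacement terms, each tending to $0$ by asymptotic regularity at $x$. Here is where boundedness of the gaps is crucial: since the number of terms is at most $n-1$ \emph{independently} of $m$, given $\e > 0$ I can choose $N$ so large that every displacement $\norm{T^{i+1}x - T^i x}$ with $i \ge N$ is below $\e/(2n)$ and simultaneously $\big|\,\norm{T^{k_j}x - y} - q\,\big| < \e/2$ for all chosen $k_j \ge N$. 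The triangle inequality then yields $\big|\,\norm{T^m x - y} - q\,\big| < \e$ for every such $m$.

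Combining the two cases and noting that every sufficiently large natural number is either some $k_i$ or lies in a gap below the next $k_i$, I conclude that $\lim_n \norm{T^n x - y} = q$ exists, as required. Exactly as recorded in Remark~\ref{remark}, the argument uses only that $y$ is a fixed point, so it in fact establishes the conclusion for every $y \in F(T)$; the theorem then follows by combining this lemma with Lemma~\ref{uniqueness} and weak compactness of $C$, precisely as in the proof of Theorem~\ref{theorem}.
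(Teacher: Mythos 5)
Your proposal is correct and takes essentially the same approach as the paper's own proof: the same pigeonhole selection (from $\al$-nonexpansiveness applied at a fixed point $y$) of a subsequence $(k_i)_i$ with gaps uniformly bounded by the length of $\al$ along which the distances are non-increasing, followed by the same telescoping triangle-inequality estimate using asymptotic regularity to show the indices lying in the gaps also converge to the same limit $q$. The only differences are cosmetic (your $\e/(2n)$ threshold versus the paper's $\e/n_0$, and your explicit observation that only $y \in F(T)$ is needed, which the paper records separately in Remark \ref{remark}).
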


\begin{proof}[Proof of the Lemma]
Let $\al = (\al_1,\ldots, \al_{n_0})$.  In the same way as above, we build a sequence $(k_n)_n$ for which
\begin{enumerate}
\item $k_n + 1 \leq k_{n+1} \leq k_n + n_0$, and
\item $\norm{T^{k_{n+1}}x-y} \leq \norm{T^{k_n}x - y}$
\end{enumerate}

Again, as above, let $M = \N \setminus \{ k_n : n \in \N \}$.  If $M$ is finite, we are done.  If $M$ is infinite, then write the elements of $M$ as $(m_n)_n$, strictly increasing.  Note that for all $n \in \N$, there exist $j_n \in \N$ and $i_n \in \{1, \ldots, n_0-1\}$ for which
\[
m_n = k_{j_n} + i_n
\]
Also, $(j_n)_n$ is strictly increasing.  Now, for any $\e>0$, we can find $n$ large enough so that
\[
\norm{T^{m_n - j + 1} x - T^{m_n - j } x} < \frac{\e}{n_0} \quad \mbox{for all } j = 1, \ldots, n_0-1, \mbox{ and}
\]
\[
\Big|  \norm{T^{k_{j_n}}x - y} - q \, \Big|< \frac{\e}{n_0}, \quad \mbox{where } q = \lim_{n\to\ii} \norm{T^{k_n}x-y}
\]

Thus, for $n$ large, we have
\begin{align*}
\norm{T^{m_n}x - y} - q &\leq \norm{T^{m_n}x - T^{m_n - 1} x} + \cdots + \norm{T^{m_n - i_n + 1}x - T^{m_n - i_n} x} + \norm{T^{m_n - i_n} x - y} - q\\
	&= \norm{T^{m_n}x - T^{m_n - 1} x} + \cdots + \norm{T^{m_n - i_n + 1}x - T^{m_n - i_n} x} + \norm{T^{k_{j_n}} x - y} - q\\
	&< \underbrace{\frac{\e}{n_0} + \cdots + \frac{\e}{n_0}}_{i_n \mbox{ times}} + \frac{\e}{n_0}\\
	&\leq (n_0-1) \frac{\e}{n_0} + \frac{\e}{n_0} = \e
\end{align*}
A similar argument proves that $\big| \norm{T^{m_n}x-y} - q \, \big| < \e$ for $n$ large, and the lemma is proved.
\end{proof}


\section{Losing boundedness of $C$}
Similar arguments show that, under appropriate circumstances, the assumption of boundedness of $C$ may be dropped.  Before we state the theorem, we need the notion of a duality mapping, a lemma due to Opial \cite[Lemma 3]{opial67}, and a theorem of Garc\'ia and Piasecki \cite[Theorem 4.2]{garciapiasecki12}.

\begin{definition}
A mapping $J: X \to X^*$ is called a \textit{duality mapping} of $X$ into $X^*$ with gauge function $\mu$ (that is, $\mu: [0,\ii) \to [0,\ii)$ is strictly increasing, continuous, and $\mu(0)=0$) if, for every $x \in X$, $(Jx)(x) = \norm{Jx}\norm{x} = \mu(\norm{x})\norm{x}$.
\end{definition}

Recall also that a Banach space $(X, \norm{\cdot})$ is called \textit{uniformly convex} if for every $\e \in (0,2]$, there exists a $\delta \in (0,1)$ such that
\[
\begin{cases}
\norm{u},\norm{v} \leq 1,&\\
\norm{u-v} \geq \e
\end{cases}
\quad \implies \quad \frac{1}{2} \norm{u +v} \leq 1-\delta.
\]
It is easy to see that this is equivalent to a sequential notion of uniform convexity.  That is, $X$ is uniformly convex if and only if for every $R>0$ and for any sequences $(u_n)_n$ and $(v_n)_n$ in $X$,
\[
\begin{cases}
\norm{u_n},\norm{v_n} \leq R \mbox{ for all } $n$, \mbox{ and }&\\
\frac{1}{2}\norm{u_n+v_n} \to R
\end{cases}
\quad \implies \quad \norm{u_n - v_n} \to 0.
\]

Now we have a lemma of Opial describing those uniformly convex spaces which have Opial's property. 

\begin{lemma}[Opial]
If $(X, \norm{\cdot})$ is uniformly convex and has a weakly continuous duality mapping, then $(X,\norm{\cdot})$ is Opial.
\end{lemma}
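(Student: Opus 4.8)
The plan is to verify the defining inequality of the Opial property directly, isolating a ``soft'' non-strict inequality (which comes for free from weak continuity of the duality map) from the strict inequality (which is exactly where uniform convexity must enter). First I would normalize the problem: given $u_n \weak u$ and $v \neq u$, set $z_n := u_n - u \weak 0$ and $h := u - v \neq 0$, so that the target inequality $\liminf_n \norm{u_n - u} < \liminf_n \norm{u_n - v}$ becomes
\[
\liminf_n \norm{z_n} < \liminf_n \norm{z_n + h}, \qquad z_n \weak 0,\ h \neq 0 .
\]
To exploit the gauge $\mu$, I would introduce $\Phi(t) := \int_0^t \mu(s)\,ds$, which is continuous, strictly increasing, and strictly convex with $\Phi(0)=0$, and record the subgradient inequality
\[
\Phi(\norm{x+g}) \geq \Phi(\norm{x}) + (Jx)(g) \qquad (x, g \in X).
\]
This holds because $Jx \in \mu(\norm{x})\,\partial\norm{x} = \partial(\Phi \circ \norm{\cdot})(x)$: the identities $\norm{Jx} = \mu(\norm{x})$ and $(Jx)(x) = \mu(\norm{x})\norm{x}$ say precisely that $Jx/\mu(\norm{x})$ is a norming functional for $x$ when $x \neq 0$, while $J0 = 0$ since $\mu(0)=0$.

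The soft half is then immediate. Since $J$ is weakly continuous and $z_n \weak 0$, we get $Jz_n \weak^* J0 = 0$, so $(Jz_n)(g) \to_n 0$ for every fixed $g \in X$; substituting into the subgradient inequality and using monotonicity of $\Phi$ yields
\[
\liminf_n \norm{z_n + g} \geq \liminf_n \norm{z_n} =: r \qquad \text{for every } g \in X .
\]
In particular $\liminf_n \norm{z_n + h} \geq r$, so the only thing that can fail is strictness. Accordingly I would argue by contradiction, assuming $\liminf_n \norm{z_n + h} = r$.

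Under that assumption I would pass to a subsequence with $\norm{z_{n_k} + h} \to r$; applying the subgradient inequality once more (and again $(Jz_{n_k})(h) \to 0$) forces $\limsup_k \norm{z_{n_k}} \leq r$, which together with the general lower bound above gives $\norm{z_{n_k}} \to r$. If $r = 0$, then both $z_{n_k}$ and $z_{n_k}+h$ converge strongly to $0$, whence $h = 0$, a contradiction. If $r > 0$, I would next pin down the midpoints: convexity gives $\norm{z_{n_k} + \tfrac12 h} \leq \tfrac12\big(\norm{z_{n_k}} + \norm{z_{n_k}+h}\big) \to r$, while the general lower bound applied to the weakly null subsequence with $g = \tfrac12 h$ gives $\liminf_k \norm{z_{n_k}+\tfrac12 h} \geq r$, so $\norm{z_{n_k}+\tfrac12 h}\to r$. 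Setting $a_k := z_{n_k}$ and $b_k := z_{n_k}+h$, we then have $\norm{a_k}, \norm{b_k} \to r$ and $\tfrac12\norm{a_k + b_k} = \norm{z_{n_k}+\tfrac12 h} \to r$; normalizing by $R_k := \max(\norm{a_k}, \norm{b_k}) \to r > 0$ and invoking the sequential form of uniform convexity gives $\norm{a_k - b_k} \to 0$, i.e. $\norm{h} = 0$, the desired contradiction. Hence the inequality is strict and $X$ is Opial. I expect the main obstacle to be exactly this final step: weak continuity of $J$ only ever produces the non-strict inequality, so all the genuine content lies in converting the equality case into a collision of two norm-$r$ vectors whose midpoint also has norm $r$, the one configuration where uniform convexity forces them to coincide.
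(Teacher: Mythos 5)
Your proof is correct. One piece of context: the paper does not prove this lemma at all --- it is quoted as a known result (cited as Lemma 3 of \cite{opial67}) and used as a black box --- so there is no in-paper argument to compare against; what you have written is a self-contained proof in the spirit of Opial's original one. The steps all check: the subgradient inequality $\Phi(\norm{x+g}) \geq \Phi(\norm{x}) + (Jx)(g)$ follows from $(Jx)(g) \leq \mu(\norm{x})\left(\norm{x+g}-\norm{x}\right)$ together with the scalar convexity inequality $\Phi(b) \geq \Phi(a) + \mu(a)(b-a)$; the liminf manipulations are legitimate because $\Phi$ is continuous and strictly increasing and weakly convergent sequences are bounded; the subsequence argument correctly pins $\norm{z_{n_k}}$, $\norm{z_{n_k}+h}$, and $\norm{z_{n_k}+\tfrac12 h}$ all to the common limit $r$; and the rescaling by $R_k = \max(\norm{a_k},\norm{b_k}) \to r > 0$ is exactly what is needed to fit the sequential form of uniform convexity quoted in the paper (with $R=1$). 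The only point to flag is cosmetic: the paper's definition of the duality map is vacuous at $x=0$, so $J0=0$ is really part of the standard convention $\norm{Jx}=\mu(\norm{x})$ rather than a consequence of the displayed identity; your use of it is what Opial intends.

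It is worth observing that your own machinery proves the lemma without uniform convexity. Apply the subgradient inequality twice, at $z_n$ and at $z_n+\tfrac12 h$, both in the direction $\tfrac12 h$, and chain:
\[
\Phi(\norm{z_n+h}) \;\geq\; \Phi(\norm{z_n}) + (Jz_n)\left(\tfrac12 h\right) + \left(J\left(z_n+\tfrac12 h\right)\right)\left(\tfrac12 h\right).
\]
Weak continuity of $J$ kills the middle term in the limit, while $z_n + \tfrac12 h \weak \tfrac12 h$ makes the last term converge to $\left(J\left(\tfrac12 h\right)\right)\left(\tfrac12 h\right) = \mu\left(\tfrac12\norm{h}\right)\cdot\tfrac12\norm{h} > 0$. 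Taking liminfs and using strict monotonicity and continuity of $\Phi$ gives the strict Opial inequality outright --- no contradiction argument, no case split on $r$, and no convexity hypothesis on the space (this is the later Gossez--Lami Dozo refinement of Opial's lemma). Your route is longer but uses the hypotheses exactly as stated; the two-step subgradient route shows that all the strictness one needs is already contained in the weak continuity of $J$ and the strict monotonicity of the gauge.
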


Finally, we state a theorem of Garc\'ia and Piasecki regarding the structure of the set of fixed points for any mean nonexpansive mapping defined in a strictly convex space.

\begin{theorem}[Garc\'ia and Piasecki]
Suppose $C \subseteq X$ is closed and convex and $(X,\norm{\cdot})$ is strictly convex.  Then for any mean nonexpansive mapping $T : C\to C$, $F(T)$ is closed and convex.
\end{theorem}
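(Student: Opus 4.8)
The plan is to treat closedness and convexity separately, the former being routine and the latter carrying all the weight.

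First I would record that every $\al$-nonexpansive map is Lipschitz, hence continuous: since $\al_1>0$, the defining inequality already gives $\al_1\norm{Tx-Ty}\le\norm{x-y}$, so $T$ is $(1/\al_1)$-Lipschitz. Closedness of $F(T)$ is then immediate, as $F(T)=(I-T)^{-1}(\{0\})$ is the preimage of a closed set under a continuous map. This part uses neither strict convexity nor the mean structure beyond $\al_1>0$.

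For convexity I would fix $x,y\in F(T)$ and, for $c\in[0,1]$, set $z=(1-c)x+cy$. Applying the $\al$-nonexpansive inequality to the pairs $(z,x)$ and $(z,y)$ and using $T^jx=x$, $T^jy=y$ produces $\sum_j\al_j\norm{T^jz-x}\le c\norm{x-y}$ and $\sum_j\al_j\norm{T^jz-y}\le(1-c)\norm{x-y}$. Adding these and bounding each bracket below by $\norm{T^jz-x}+\norm{T^jz-y}\ge\norm{x-y}$ forces equality throughout, because $\sum_j\al_j=1$. Strict convexity then upgrades the triangle equality $\norm{T^jz-x}+\norm{T^jz-y}=\norm{x-y}$ to the statement that each $T^jz$ lies on the segment $[x,y]$; in particular $Tz\in[x,y]$, so $T$ maps $[x,y]$ into itself. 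Parametrizing by $\gamma(s)=(1-s)x+sy$ and writing $T\gamma(s)=\gamma(\phi(s))$ defines a continuous $\phi:[0,1]\to[0,1]$ with $\phi(0)=0$, $\phi(1)=1$, and $T^j\gamma(s)=\gamma(\phi^{(j)}(s))$. Reading off the two equalities above (now with $\norm{T^jz-x}=\phi^{(j)}(c)\norm{x-y}$) yields the functional identity $\sum_{j=1}^n\al_j\phi^{(j)}(c)=c$ for every $c$, and proving $z\in F(T)$ reduces to proving $\phi=\mathrm{id}$.

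The main obstacle is exactly this one-dimensional rigidity: the identity is an averaging relation and does not term-by-term pin down $\phi(c)$. The key observation I would exploit is that, fixing $c$ and setting $a_i=\phi^{(i)}(c)$, the identity applied at $\phi^{(i)}(c)$ reads $\sum_{j=1}^n\al_j a_{i+j}=a_i$, so each $a_i$ is a \emph{convex combination} of $a_{i+1},\dots,a_{i+n}$; hence $a_i\le\max(a_{i+1},\dots,a_{i+n})$, which makes the tail suprema $\sup_{k\ge i}a_k$ (and symmetrically the tail infima) independent of $i$. Since $(a_i)$ is bounded and solves a linear recurrence whose characteristic polynomial $\sum_j\al_j\lambda^j-1$ has $\lambda=1$ as its only unit-modulus root, and a simple one since $\al_1,\al_n>0$, the orbit converges, say $a_i\to A$; the constant tail-supremum must equal $A$, forcing $a_i\le A$ for all $i$, and symmetrically $a_i\ge A$, whence $a_i\equiv A$ and $\phi(c)=a_1=a_0=c$. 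As $c$ is arbitrary, $[x,y]\subseteq F(T)$ and $F(T)$ is convex. I would flag the strict-convexity equality case and the spectral convergence of the bounded orbit as the two points where the hypotheses are genuinely used, the remainder being bookkeeping.
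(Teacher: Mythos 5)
This statement appears in the paper only as a quoted result of Garc\'ia and Piasecki, with a citation to their 2012 article; the paper contains no proof of it, so there is no internal argument to compare yours against. Judged on its own merits, your proof is correct. Closedness is as routine as you say. For convexity, your chain of reductions is sound: equality in the averaged triangle inequality (for each $j$ with $\al_j>0$), strict convexity placing $T^jz$ on $[x,y]$ --- in particular $Tz$, since $\al_1>0$, which gives invariance of the segment --- then the parametrization $\phi$ and the identity $\sum_{j}\al_j\phi^{(j)}(c)=c$. Your recurrence argument does close the one-dimensional rigidity gap: the tail suprema and infima of $(a_i)$ are constant in $i$, the characteristic polynomial $\sum_j\al_j\lambda^j-1$ has $\lambda=1$ as its unique unit-modulus root (this uses $\al_1>0$) and that root is simple, so a bounded solution converges, and convergence together with the constant tail suprema/infima forces $a_i\equiv c$, hence $\phi(c)=c$.

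Two remarks on streamlining. First, the spectral step is heavier than needed: for $|\lambda|<1$ one has $\bigl|\sum_j\al_j\lambda^j\bigr|\le\sum_j\al_j|\lambda|^j\le|\lambda|<1$, so every root other than $\lambda=1$ lies strictly \emph{outside} the unit circle; bounded solutions are therefore constant outright, and the convergence-plus-tail-supremum maneuver becomes unnecessary. Second, the recurrence machinery can be avoided entirely: the identity $\sum_j\al_j\phi^{(j)}=\mathrm{id}$ makes $\phi$ injective (if $\phi(c)=\phi(c')$ then all iterates agree, so $c=c'$ after averaging), hence, being continuous on $[0,1]$ with $\phi(0)=0$ and $\phi(1)=1$, strictly increasing; if $\phi(c)>c$ then $\phi^{(j)}(c)\ge\phi(c)>c$ for all $j\ge1$, whence $c=\sum_j\al_j\phi^{(j)}(c)\ge\phi(c)>c$, a contradiction, and symmetrically for $\phi(c)<c$. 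Either route finishes the proof; yours stands as written, granting the standard (but nontrivial, and correctly flagged by you) facts about bounded solutions of constant-coefficient linear recurrences.
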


We use the tools above to prove a theorem:

\begin{theorem}\label{not bounded}
Suppose $(X,\norm{\cdot})$ is uniformly convex with a weakly sequentially continuous duality map and $C \subseteq X$ is closed and convex.  Assume further that $T : C\to C$ is $\al$-nonexpansive, $F(T) \neq \emptyset$, and $T$ is asymptotically regular at some $x \in C$.  Then $(T^n x)_n$ converges weakly to some $y_0 \in F(T)$.
\end{theorem}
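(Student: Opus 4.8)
The plan is to remove the weak compactness assumption by establishing separately the two things that weak compactness previously supplied for free: that the orbit $(T^n x)_n$ has a weak subsequential limit at all, and that the set of such limits is a singleton. Once $\omega_w(x)$ is shown to be a nonempty subset of $F(T)$ on which $(\norm{T^n x - y})_n$ converges, the conclusion follows exactly as in Lemmas \ref{existence} and \ref{uniqueness}.

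The decisive step is to show $(T^n x)_n$ is bounded. I would fix any $y_* \in F(T)$, which exists by hypothesis. By Remark \ref{remark}, the argument of Lemma \ref{existence} (and its arbitrary-$\al$ analogue from the previous section) applies to \emph{every} fixed point, so $\lim_n \norm{T^n x - y_*}$ exists and is finite; hence $(\norm{T^n x - y_*})_n$ is bounded and therefore so is $(T^n x)_n$. Since $X$ is uniformly convex it is reflexive, so bounded sequences have weakly convergent subsequences, and thus $\omega_w(x) \neq \emptyset$. This recovers, from $F(T) \neq \emptyset$ alone, the single consequence of weak compactness that was used in Theorem \ref{theorem}.

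With a nonempty $\omega_w(x)$ in hand I would repeat the earlier argument. By Opial's lemma, $X$—and hence the closed convex set $C$—has the Opial property, so $I - T$ is demiclosed at $0$ by \cite{gallagher16}. Given $y \in \omega_w(x)$, choose $n_k$ with $T^{n_k} x \weak y$; asymptotic regularity at $x$ gives $(I-T)T^{n_k}x = T^{n_k}x - T^{n_k+1}x \to 0$, so demiclosedness yields $y \in F(T)$, i.e. $\omega_w(x) \subseteq F(T)$. Remark \ref{remark} then shows $\lim_n \norm{T^n x - y}$ exists for every $y \in \omega_w(x)$, and Lemma \ref{uniqueness} (which needs only that $C$ is Opial) forces $\omega_w(x)$ to be a single point $y_0 \in F(T)$. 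Finally, a bounded sequence in a reflexive space whose weak subsequential limits are all equal to $y_0$ must converge weakly to $y_0$, so $T^n x \weak y_0$ and the theorem follows.

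The main obstacle is precisely this boundedness/nonemptiness step: everything downstream is the bounded-case proof, so the real work lies in observing that a single fixed point traps the orbit in a ball and that uniform convexity then supplies the reflexivity needed to extract a weak limit. The theorem of Garc\'ia and Piasecki, guaranteeing that $F(T)$ is closed and convex (uniform convexity implies strict convexity), is not strictly required for the Opial route above; it is rather the structural fact underlying an alternative, asymptotic-center style proof of uniqueness in the spirit of Browder and Petryshyn, where for distinct $p, q \in \omega_w(x) \subseteq F(T)$ one would use $\tfrac{p+q}{2} \in F(T)$ together with the modulus of uniform convexity, working directly with the convex set $F(T)$ rather than with Opial's property. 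I would nonetheless take the Opial route, since Opial's lemma already hands us the property we need.
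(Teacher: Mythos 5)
Your proof is correct, but it takes a genuinely different route from the paper's, which follows Opial's original asymptotic-center argument. The two proofs share the same opening move: Remark \ref{remark} applied to a single fixed point $y_*$ shows that $\lim_n \norm{T^n x - y_*}$ exists, so the orbit is bounded. From there, however, the paper defines $\p(y) := \lim_n \norm{T^n x - y}$ on $F(T)$ and studies the level sets $F_r = \p^{-1}[0,r]$: it invokes the Garc\'ia--Piasecki theorem so that $F(T)$ (hence each $F_r$) is closed and convex, reflexivity to obtain a minimal $r_0$ with $F_{r_0} \neq \emptyset$, and uniform convexity to show $F_{r_0}$ is a singleton $\{y_0\}$; Opial's property enters only at the very end, to show that any weak subsequential limit $y \neq y_0$ of the orbit would satisfy $\p(y) < r_0$, forcing $T^n x \weak y_0$. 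You instead reduce everything to the bounded-case machinery of Theorem \ref{theorem}: reflexivity (Milman--Pettis) plus boundedness replaces weak compactness in giving $\omega_w(x) \neq \emptyset$, demiclosedness plus asymptotic regularity gives $\omega_w(x) \subseteq F(T)$, Remark \ref{remark} supplies exactly the hypothesis of Lemma \ref{uniqueness} to make $\omega_w(x)$ a singleton, and the standard subsequence argument converts a unique weak subsequential limit into weak convergence. Your route is leaner: it never uses convexity of $F(T)$, so the Garc\'ia--Piasecki theorem and the uniform-convexity singleton argument drop out entirely (uniform convexity being used only through Milman--Pettis and the hypothesis of Opial's lemma), and it in fact establishes the sharper statement that reflexive $+$ Opial $+$ $F(T) \neq \emptyset$ suffices. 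What the paper's approach buys is the structural perspective behind its closing remark: the asymptotic-center proof isolates conditions ($F(T)$ convex, $F_{r_0}$ a singleton) under which the conclusion survives in a general reflexive Opial space, and it stays parallel to the original arguments of Opial and Browder--Petryshyn --- it is precisely the alternative proof you sketch and set aside in your final paragraph.
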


The proof follows largely from the work done above and the original proof for nonexpansive mappings due to Opial \cite[Theorem 1]{opial67}, and we present it here for completeness.

\begin{proof}
By Opial's lemma, $X$ is uniformly convex with a weakly continuous duality map implies that $X$ is Opial.  Thus, for every $y \in F(T)$, by the proof of Theorem \ref{existence} and Remark \ref{remark}, we know that $\lim_n \norm{T^nx - y}$ exists.  In particular, this implies that $\{T^nx : n \in \N \}$ is bounded.  Let $\p: F(T) \to [0,\ii)$ be given by $\p(y) := \lim_n \norm{T^n x - y}$.  For any $r \in [0,\ii)$, consider the set
\begin{align*}
F_r :&= \{ y \in F(T) : \p(y) \leq r\}\\
	&= \p^{-1} [0, r].
\end{align*}
We summarize the relevant facts about $F_r$.
\begin{claim}\label{claim}
The sets $F_r$ satisfy the following four properties:
\begin{enumerate}
\item $F_r$ is nonempty for $r$ sufficiently large, 
\item $F_r$ is closed, bounded, and convex for all $r\geq 0$,
\item there is a minimal $r_0$ for which $F_{r_0}$ is nonempty, and
\item $F_{r_0}$ is a singleton.
\end{enumerate}
\end{claim}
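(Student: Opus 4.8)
The plan is to read off all four properties from the elementary structure of the functional $\p(y) = \lim_n \norm{T^n x - y}$ already introduced above. Two facts about $\p$ do all the work. From the reverse triangle inequality $\big|\norm{T^n x - y} - \norm{T^n x - z}\big| \le \norm{y-z}$, letting $n \to \ii$ shows that $\p$ is $1$-Lipschitz on $F(T)$, hence continuous. And since each $y \mapsto \norm{T^n x - y}$ is convex, on the convex set $F(T)$ (convex by Garc\'ia--Piasecki, as a uniformly convex space is strictly convex) the pointwise limit $\p$ is convex. Granting these, (1) is immediate: as $F(T) \ne \emptyset$, any $y_1 \in F(T)$ lies in $F_r$ for every $r \ge \p(y_1)$. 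For (2), $F_r = F(T) \cap \{\p \le r\}$ is convex (an intersection of two convex sets), closed (continuity of $\p$ and closedness of $F(T)$), and bounded because the orbit is bounded, say $\norm{T^n x} \le R$, giving $\norm{y} \le R + \p(y) \le R + r$ for every $y \in F_r$.

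For (3) I would pass to weak compactness. By the Milman--Pettis theorem a uniformly convex space is reflexive, so closed, bounded, convex sets are weakly compact. Pick $r_1$ large enough that $F_{r_1} \ne \emptyset$; by (2) this set is weakly compact, and $\p$ restricted to it is convex and continuous, hence weakly lower semicontinuous. A weakly lower semicontinuous functional attains its infimum on a weakly compact set, so $\p$ has a minimizer $y_0 \in F_{r_1}$; put $r_0 := \p(y_0)$. Since every $y \notin F_{r_1}$ satisfies $\p(y) > r_1 \ge \p(y_0)$, the point $y_0$ in fact minimizes $\p$ over all of $F(T)$. Thus $F_{r_0} \ne \emptyset$ while $F_r = \emptyset$ for all $r < r_0$, which is exactly the minimality asserted in (3).

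Property (4) is the crux and is where uniform convexity is used decisively. Suppose $y, z \in F_{r_0}$ with $y \ne z$; minimality forces $\p(y) = \p(z) = r_0$. If $r_0 = 0$ then $T^n x \to y$ and $T^n x \to z$ strongly, so $y = z$; hence assume $r_0 > 0$. By convexity of $F(T)$ the midpoint $w := \tfrac12(y+z)$ is a fixed point, and with $u_n := T^n x - y$ and $v_n := T^n x - z$ we have $\norm{u_n} \to r_0$, $\norm{v_n} \to r_0$, the difference $u_n - v_n = z - y$ constant and nonzero, and $\tfrac12(u_n + v_n) = T^n x - w$. Were $\p(w) = r_0$, then $\tfrac12\norm{u_n + v_n} \to r_0$, and the sequential form of uniform convexity stated above (after rescaling $u_n, v_n$ onto the sphere of radius $r_0$, which is harmless since their norms tend to $r_0 > 0$) would give $\norm{u_n - v_n} \to 0$, contradicting $\norm{u_n - v_n} = \norm{z - y} > 0$. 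Hence $\p(w) < r_0$ with $w \in F(T)$, contradicting the minimality of $r_0$. Therefore $F_{r_0}$ is a singleton.

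I expect the main difficulty to sit in (3) and (4) jointly: showing that the minimal value $r_0$ is attained is precisely where reflexivity and weak compactness are unavoidable, since $F(T)$ itself need not be bounded, while showing that the minimizer is unique is precisely where full uniform convexity, not merely strict convexity, is needed to control the orbit in the limit. The one calculation demanding care is the normalization in (4), since $\norm{u_n}$ and $\norm{v_n}$ only converge to $r_0$ rather than being bounded by $r_0$, so a small rescaling is required before the stated sequential criterion applies.
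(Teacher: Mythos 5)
Your proof is correct, and in parts (1), (2), and (4) it either fills in details the paper omits or mirrors the paper's argument; the genuine divergence is in (3). The paper proves (3) by observing that $\{F_r : r \geq 0\}$ is a nested family of weakly compact sets (reflexivity plus part (2)), so with $t := \inf\{r : F_r \neq \emptyset\}$ the finite intersection property gives $F_t = \bigcap_{r>t} F_r \neq \emptyset$; you instead minimize $\p$ directly, using that $\p$ is convex and $1$-Lipschitz on $F(T)$, hence weakly lower semicontinuous, and therefore attains its infimum on the weakly compact set $F_{r_1}$. Both arguments rest on the same pillars (reflexivity via Milman--Pettis, and convexity of $F(T)$ via Garc\'ia--Piasecki), so neither is more economical in hypotheses; yours has the advantage of producing $r_0$ as an attained minimum in one stroke, while the paper's nested-intersection argument avoids any discussion of semicontinuity. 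In (4) you follow the paper's midpoint argument but are more careful on two points the paper glosses over: the sequential uniform convexity criterion as stated in the paper requires $\norm{u_n}, \norm{v_n} \leq R$ for all $n$, whereas here the norms $\norm{T^n x - u}$, $\norm{T^n x - v}$ only \emph{converge} to $r_0$, so the rescaling step you supply is genuinely needed for a rigorous application; and your separate treatment of the degenerate case $r_0 = 0$ (where the criterion's hypothesis $R > 0$ fails but strong convergence makes uniqueness trivial) closes a small gap that the paper's version leaves implicit.
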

\begin{proof}[Proof of Claim \ref{claim}]
(1) and (2) are easy to verify.  

(3) follows from the fact that each $F_r$ is weakly compact (since $X$ is reflexive) and $\{F_r : r \geq 0\}$ forms a nested family.  Thus, if each $F_r \neq \emptyset$ for $r > t$ for some $t \geq 0$, it follows that 
\[
F_t = \bigcap_{r>t} F_r \neq \emptyset.
\]  

(4) follows from uniform convexity.  Suppose $u, v \in F_{r_0}$ with $u \neq v$, and let $z := \frac{1}{2}(u+v)$.  Note that $z \in F_{r_0}$ since $F_{r_0}$ is convex.  Because $r_0$ is minimal for which $F_{r_0} \neq \emptyset$, it follows that $\p(u)=r_0=\p(v)$.  We want to show that $\p(z) < r_0$.  Suppose for a contradiction that $\p(z) = r_0$.  Then
\[
\lim_n \frac{1}{2} \norm{(T^nx - u) + (T^n x - v)} = \lim_n \norm{T^nx - z} = r_0
\]
and uniform convexity implies that
\[
\lim_n \norm{(T^nx - u) - (T^n x - v)} = \norm{u - v} = 0,
\]
but $\norm{u-v}>0$.  This tells us that $\p(z) < r_0$, which contradicts the minimality of $r_0$. Hence, $F_{r_0}$ must be a singleton.  This completes the proof of the claim.
\end{proof}

Let $F_{r_0} = \{y_0\}$.  We aim to show that $T^nx \weak y_0$.  For a contradiction, suppose this is not the case.  Since $\{ T^n x : n \in \N \}$ is bounded and $X$ is reflexive, there is some subsequence $(T^{n_k}x)_k$ converging weakly to some $y \neq y_0$.  By asymptotic regularity of $T$ and demiclosedness of $I-T$ at $0$, we know that $\norm{(I-T)T^{n_k}x} \to 0$ yields $Ty=y$.  That is, $y \in F(T)$.  Thus,
\begin{align*}
r_0 = \p(y_0) &= \lim_n \norm{T^nx - y_0}\\
	&= \lim_k \norm{T^{n_k}x - y_0}\\
	&> \lim_k \norm{T^{n_k}x - y}\\
	&= \lim_n \norm{T^nx - y} = \p(y),
\end{align*}
which contradicts the minimality of $r_0$.  Finally, we have that $T^nx \weak y_0$, and the proof is complete.
\end{proof}

\begin{remark}
We note here, just as Opial did, that the same result will hold in any reflexive Opial space where $F(T)$ is convex and $F_{r_0}$ is a singleton.  For example, to guarantee that $F(T)$ is convex for a mean nonexpansive map, we need only assume strict convexity of $X$ as opposed to uniform convexity.
\end{remark}

\end{document}